\newtheorem{theorem}{Theorem}[section]
\newtheorem{lemma}[theorem]{Lemma}
\newtheorem{proposition}[theorem]{Proposition}
\newtheorem{corollary}[theorem]{Corollary}
\theoremstyle{definition}
\newtheorem{definition}[theorem]{Definition}
\numberwithin{equation}{section}
\begin{document}

\baselineskip=15.5pt

\title[Semisimple algebras of vector fields on ${\mathbb C}^N$]{Semisimple
algebras of vector fields on ${\mathbb C}^N$ of maximal rank}

\author[H. Azad]{Hassan Azad}

\address{Abdus Salam School of Mathematical Sciences, GCU, Lahore 54600, Pakistan}

\email{hassan.azad@sms.edu.pk}

\author[I. Biswas]{Indranil Biswas}

\address{Department of Mathematics, Shiv Nadar University, NH91, Tehsil Dadri,
Greater Noida, Uttar Pradesh 201314, India}

\email{indranil.biswas@snu.edu.in, indranil29@gmail.com}

\author[F. M. Mahomed]{Fazal M. Mahomed}

\address{DSI-NRF Centre of Excellence in Mathematical and Statistical Sciences, School of Computer Science
and Applied Mathematics, University of the Witwatersrand, Johannesburg, Wits 2050, South Africa}

\email{Fazal.Mahomed@wits.edu.za}

\subjclass[2000]{17B66, 32M25, 57R25}

\keywords{Vector field, semisimple Lie algebra, Levi decomposition.}

\date{}

\begin{abstract}
A local classification of semisimple Lie algebras of vector fields on ${\mathbb C}^N$ that have a Cartan subalgebra
of dimension $N$ is given.
The proof uses basic representation theory and the local canonical form
of semisimple Lie algebras of vector fields.
\end{abstract}

\maketitle

\section{Introduction}\label{se1}

All vector fields considered in this paper are assumed to be complex analytic.
By a Lie algebra of vector fields on ${\mathbb C}^N$ Lie meant a Lie algebra of
analytic vector fields defined on some connected open subset ${\mathcal U}$ of ${\mathbb C}^N$. He considered two such Lie algebras
$L_1$ and $L_2$, defined on open subsets ${\mathcal U}_1$ and ${\mathcal U}_2$ respectively, to be equivalent if there is an
open set ${\mathcal U}\, \subset\, {\mathcal U}_1$
and an analytic map $\phi\, :\, {\mathcal U}\, \longrightarrow\, {\mathcal U}_2$, which is a diffeomorphism
between ${\mathcal U}$ and $\phi({\mathcal U})$ and $\phi_*(L_1\big\vert_{\mathcal U})\,=\,
L_2\big\vert_{\phi(\mathcal U)}$.
This is the same as saying that all Lie algebras obtained from a given Lie algebra $L$ of vector fields by a local change of coordinates are considered to be equivalent.
Lie gave, in this sense, a classification of all finite dimensional
Lie algebras of vector fields on ${\mathbb C}^2$ \cite{Li}. This was later extended to vector fields on the real plane in \cite{GKO}.

In this paper we give a local classification of all finite dimensional complex semisimple Lie algebras of vector fields on
${\mathbb C}^N$ that have a Cartan subalgebra of dimension $N$. The main results proved here is the following:

\begin{theorem}\label{thm1}
If $\mathcal G$ is a finite dimensional semisimple Lie algebra of vector fields on ${\mathbb C}^N$ which has a
Cartan subalgebra of dimension $N$, then the simple factors of $\mathcal G$ must be of type $A_{\ell_i}$, $1\, \leq\, i\,\leq
\, d$, such that $\sum_{i=1}^d \ell_i \,=\, N$.
\end{theorem}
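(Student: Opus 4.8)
The plan is to exploit the $N$-dimensional Cartan subalgebra $\mathfrak{h}\subset\mathcal{G}$ as a set of commuting vector fields whose simultaneous action can be straightened out. First I would use the assumption that $\mathfrak{h}$ has dimension $N$ equal to the ambient dimension: at a generic point the $N$ commuting vector fields in $\mathfrak{h}$ are linearly independent, so by the simultaneous straightening theorem for commuting vector fields one may choose local coordinates $(x_1,\dots,x_N)$ in which $\mathfrak{h}$ becomes the span of $\partial/\partial x_1,\dots,\partial/\partial x_N$. Equivalently, using exponential coordinates, one gets an action in which $\mathfrak{h}$ acts by translations, and the crucial point is that the weight decomposition of $\mathcal{G}$ under $\operatorname{ad}\mathfrak{h}$ is realized geometrically: a root vector $e_\alpha$ with $[\mathfrak{h},e_\alpha]=\alpha(\mathfrak{h})e_\alpha$ must, in these coordinates, be a vector field whose components transform by the character $e^{\alpha}$, forcing $e_\alpha$ to be of the form $e^{\langle\alpha,x\rangle}$ times a constant-coefficient field. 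Reading off the allowed exponential monomials then constrains which roots can occur.

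Next I would analyze the root system of $\mathcal{G}$ through this representation. The adjoint action of $\mathfrak{h}$ on $\mathcal{G}$ gives the root space decomposition $\mathcal{G}=\mathfrak{h}\oplus\bigoplus_\alpha \mathcal{G}_\alpha$, and the geometric realization above shows each root $\alpha$ lies in the character lattice generated by the coordinate translations; the key representation-theoretic input is that the roots appearing as exponents of genuine (single-valued, or at least convergent) vector fields on an open subset of $\mathbb{C}^N$ are severely restricted. The heart of the matter is to show that the root system can contain no two roots $\alpha,\beta$ with $\alpha+\beta$ also a root unless the configuration is exactly that of $\mathfrak{sl}_{\ell+1}$, whose roots $\varepsilon_i-\varepsilon_j$ sit inside a translation lattice and whose root vectors $x_i\,\partial/\partial x_j$ are honest polynomial vector fields on $\mathbb{C}^N$. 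Concretely I expect to show that each simple factor acts on a disjoint block of coordinates, reducing to the case of a single simple factor, and that for a single simple factor the maximal-rank faithful realization by vector fields forces type $A$.

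For the single-factor reduction, I would argue that since $\mathfrak{h}=\bigoplus_i \mathfrak{h}_i$ with $\mathfrak{h}_i$ the Cartan of the $i$-th simple factor $\mathcal{G}_i$, and since $\dim\mathfrak{h}=N=\sum_i \operatorname{rank}\mathcal{G}_i$, the dimension count $\sum \ell_i=N$ will follow automatically once each factor is shown to be of type $A_{\ell_i}$: indeed $\operatorname{rank}A_{\ell_i}=\ell_i$, so $N=\dim\mathfrak{h}=\sum_i\operatorname{rank}\mathcal{G}_i=\sum_i\ell_i$. Thus the entire theorem reduces to the type-$A$ claim. To prove that claim I would suppose a simple factor $\mathcal{G}_i$ is \emph{not} of type $A$ and derive a contradiction by exhibiting a root vector that cannot be realized as a convergent vector field of the required weight — for instance a root of the form $2\varepsilon_i$ or $\varepsilon_i+\varepsilon_j$ (as occur in types $B,C,D$) or the short/long roots forced in the exceptional types, each of which would demand a vector field component with an exponential character outside the admissible translation lattice spanned by the coordinate derivations.

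The main obstacle I anticipate is precisely the last step: ruling out all non-$A$ types uniformly. It is not enough to note that the abstract root system differs; one must translate the combinatorics of each root system into an obstruction at the level of vector fields, and this requires pinning down exactly which weights can arise as weights of the ``vector-field representation'' of $\mathcal{G}$ on the space of germs of analytic vector fields, together with the bracket relations among the root vectors. I expect the cleanest route is to use the local canonical form of semisimple algebras of vector fields quoted in the abstract, which should already encode that the action integrates to a local algebraic group action whose weights are the weights of a faithful representation on $\mathbb{C}^N$ of minimal possible dimension; since among simple Lie algebras the smallest faithful representation has dimension equal to the rank plus one only for type $A$ (the standard representation of $\mathfrak{sl}_{\ell+1}$ on $\mathbb{C}^{\ell+1}$, giving a rank-$\ell$ action on an $\ell$-dimensional affine chart), the maximal-rank hypothesis $\dim\mathfrak{h}=N$ squeezes out every other type. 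Making this minimal-dimension comparison rigorous, and reconciling it with the projective/affine nature of the local coordinates, is where the real work lies.
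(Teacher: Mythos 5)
Your opening step---straightening the $N$-dimensional Cartan subalgebra to $\langle\partial_{x_1},\dots,\partial_{x_N}\rangle$ and observing that every root vector then takes the form $e^{\langle\alpha,x\rangle}$ times a constant-coefficient field---is exactly the paper's starting point. But the mechanism you propose for ruling out non-$A$ types does not work, and that is where the entire content of the theorem lies. There is no restriction coming from single-valuedness, convergence, or an ``admissible translation lattice'': for \emph{any} linear functional $\alpha$ whatsoever, $e^{\langle\alpha,x\rangle}\sum_j c_j\partial_{x_j}$ is a perfectly good analytic vector field, so every abstract root system can be written down formally in this shape. The obstruction is not which exponentials exist, but whether the \emph{brackets} of such fields can close up with the structure constants of a non-$A$ algebra; extracting that contradiction requires computation, which your sketch defers. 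The paper does it in three steps: (i) a projection lemma showing that, for a subset $J$ of $k$ simple roots, discarding all coordinate directions except the $k$ corresponding ones is a Lie algebra homomorphism that is faithful on the subalgebra generated by the root vectors of $\pm J$ (this uses that, in the maximal-rank canonical form, those fields' coefficients depend only on the coordinates indexed by $J$); (ii) the combinatorial fact that any simple algebra not of type $A$ contains a subalgebra of type $B_2$, $G_2$ or $D_4$ generated by simple roots; (iii) a nonexistence result---no faithful $B_2$ or $G_2$ in vector fields on ${\mathbb C}^2$, no faithful $D_4$ on ${\mathbb C}^4$---proved by putting the $A_1\times\cdots\times A_1$ subalgebra spanned by mutually orthogonal roots into the canonical form $\prod_i\langle e^{x_i}\partial_{x_i},\,e^{-x_i}\partial_{x_i}\rangle$ and showing that a highest weight vector for an invariant complement would have to commute with $e^{x_1}\partial_{x_1},\dots,e^{x_k}\partial_{x_k}$, hence be a constant field in the canonical coordinates of those commuting fields, hence lie in a $k$-dimensional space already accounted for---a contradiction.

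Note also that step (i) is what replaces your unproven assertion that ``each simple factor acts on a disjoint block of coordinates'': it handles the semisimple case at once, since a bad simple factor contains a bad rank-$k$ subalgebra generated by simple roots, which the projection then realizes faithfully on ${\mathbb C}^k$. Your fallback argument via minimal faithful representations has a separate flaw: the relevant invariant for vector-field realizations is not the minimal dimension of a linear representation but the minimal codimension of a proper subalgebra (equivalently the smallest $\dim G/P$)---already for $\mathfrak{sl}(2,{\mathbb C})$ these are $2$ versus $1$---and even granting the correct comparison for each simple type, it yields no contradiction for a non-$A$ factor of a semisimple $\mathcal G$, because that factor acts on all of ${\mathbb C}^N$ and $N$ generally exceeds the factor's minimal realization dimension; one still needs the coordinate-separation step to cut the action down to the factor's own rank. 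So the proposal is a sound opening move together with an honest acknowledgement of the gap, but the gap is real: the bracket analysis that constitutes the heart of the proof is missing.
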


The proof of Theorem \ref{thm1} uses the local canonical form of semisimple Lie algebras of vector fields on ${\mathbb R}^N$ given in \cite[Theorem 2.1]{ABM}.
This result of \cite{ABM} states that if a semisimple Lie algebra of vector fields on ${\mathbb R}^N$ has a split Cartan subalgebra then there are
local coordinates $x_1,\, \cdots,\, x_N$ with respect to which the Cartan subalgebra is generated by $\partial_{x_i}$, $1\, \leq\, i\, \leq\, r$, where $r$
is the dimension of the Cartan subalgebra. If we have a complex semisimple Lie algebra of vector fields on ${\mathbb C}^N$ then all Cartan subalgebras
of a complex semisimple Lie algebra are split. Thus we see, by repeating the arguments of \cite[Theorem 2.1]{ABM}, that there are local analytic coordinates
$x_1,\, \cdots,\, x_N$ with respect to which the Cartan subalgebra is generated by $\partial_{x_i}$, $1\, \leq\, i\, \leq\, r$, where $r$
is the dimension, over $\mathbb C$, of a given Cartan subalgebra.
We refer the reader to \cite{Ki} for results used from representation theory and root systems in the proof of Theorem \ref{thm1}.

After seeing a preprint of this paper, V. Popov sent us Vinberg's paper \cite{Vi}, where a similar result is proved for
algebraic groups over an algebraically closed field of characteristic zero. In this paper we work with local analytic vector
fields and do not assume that the vector fields are complete. The result of Vinberg is therefore
a consequence of Theorem \ref{thm1}.

\section{Structure of Lie algebra of vector fields}

\begin{lemma}\label{lem2.1}
Let $\mathcal L$ and ${\mathcal L}_1$ be the Lie algebras of analytic vector fields on ${\mathbb C}^{k+m}$ of the form
$$
\sum_{i=1}^{k+m} f_i(x_1,\,\cdots ,\, x_k) \partial_{x_i} \ \ \ and \ \ \
\sum_{i=1}^{k} f_i(x_1,\,\cdots ,\, x_k) \partial_{x_i}
$$
respectively, where $f_i$ are analytic functions of $k$ variables. Then the linear map
$$
\Pi\,\,:\,\, {\mathcal L}\,\, \longrightarrow\,\, {\mathcal L}_1,\ \ \,
\sum_{i=1}^{k+m} f_i(x_1,\cdots , x_k) \partial_{x_i} \,\longmapsto\,
\sum_{i=1}^{k} f_i(x_1,\cdots , x_k) \partial_{x_i}
$$
is a homomorphism of Lie algebras.
\end{lemma}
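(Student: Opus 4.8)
The plan is to verify directly that $\Pi$ respects the Lie bracket; linearity of $\Pi$ is immediate from its definition, so the only content is the identity $\Pi([X,\,Y]) \,=\, [\Pi(X),\,\Pi(Y)]$ for all $X,\,Y \,\in\, \mathcal L$. First I would write a general element of $\mathcal L$ as $X \,=\, \sum_{i=1}^{k+m} f_i\,\partial_{x_i}$ and $Y \,=\, \sum_{j=1}^{k+m} g_j\,\partial_{x_j}$, where every $f_i$ and $g_j$ depends only on $x_1,\,\cdots,\,x_k$, and compute the bracket from the operator formula
$$[X,\,Y] \;=\; \sum_{j=1}^{k+m}\Big(\sum_{i=1}^{k+m} f_i\,\partial_{x_i} g_j \;-\; \sum_{i=1}^{k+m} g_i\,\partial_{x_i} f_j\Big)\partial_{x_j}.$$

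The key observation, and the one on which everything turns, is that since each $f_j$ and $g_j$ is a function of $x_1,\,\cdots,\,x_k$ alone, one has $\partial_{x_i} g_j \,=\, 0$ and $\partial_{x_i} f_j \,=\, 0$ whenever $i \,>\, k$. Consequently the inner sums over $i$ collapse to $\sum_{i=1}^{k}$, so that the coefficient of $\partial_{x_j}$ in $[X,\,Y]$, namely
$$h_j \;=\; \sum_{i=1}^{k}\big(f_i\,\partial_{x_i} g_j \,-\, g_i\,\partial_{x_i} f_j\big),$$
involves only the components $f_1,\,\cdots,\,f_k$ and $g_1,\,\cdots,\,g_k$, which are precisely the data retained by $\Pi$. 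In particular each $h_j$ again depends only on $x_1,\,\cdots,\,x_k$, which simultaneously confirms that $\mathcal L$ is closed under the bracket (and the same reasoning with $m\,=\,0$ shows $\mathcal L_1$ is too), so the two objects in the statement are indeed Lie algebras and $\Pi$ lands in $\mathcal L_1$.

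Finally I would compare the two sides. Applying $\Pi$ to $[X,\,Y]$ simply discards the components $h_{k+1},\,\cdots,\,h_{k+m}$, leaving $\sum_{j=1}^{k} h_j\,\partial_{x_j}$. On the other hand $\Pi(X) \,=\, \sum_{i=1}^{k} f_i\,\partial_{x_i}$ and $\Pi(Y) \,=\, \sum_{j=1}^{k} g_j\,\partial_{x_j}$, and their bracket, computed by the same formula restricted to indices $\leq k$, has $\partial_{x_j}$-coefficient exactly $h_j$ for $1 \,\leq\, j \,\leq\, k$. The two expressions coincide, which is the desired homomorphism property.

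I do not anticipate a genuine obstacle here: the result is a formal consequence of the bracket formula. The only point requiring care is the bookkeeping around the dropped components, namely checking that the extra terms $f_{k+1}\partial_{x_{k+1}} + \cdots$ and $g_{k+1}\partial_{x_{k+1}} + \cdots$ feed into none of the retained coefficients $h_1,\,\cdots,\,h_k$; this is exactly what the vanishing $\partial_{x_i} f_j \,=\, \partial_{x_i} g_j \,=\, 0$ for $i \,>\, k$ guarantees.
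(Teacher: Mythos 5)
Your proof is correct and is essentially the paper's own argument: both verify the homomorphism property directly from the coordinate formula for the bracket, with the whole matter turning on the same key fact that $\partial_{x_i} f_j \,=\, \partial_{x_i} g_j \,=\, 0$ for $i \,>\, k$ since the coefficients depend only on $x_1,\,\cdots,\,x_k$. Your version is merely more explicit (and usefully records closure of $\mathcal L$ and $\mathcal L_1$ under the bracket), whereas the paper compresses the computation by citing the identity $[X,\, h\cdot Y] \,=\, h\cdot [X,\,Y] + X(h)\cdot Y$.
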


\begin{proof}
For vector fields $X$ and $Y$, and a smooth function $h$, we have
$$
[X,\, h\cdot Y]\,\,=\,\, h\cdot [X,\, Y]+ X(h)\cdot Y.
$$
Also, $\partial_{x_j} f(x_1,\,\cdots ,\, x_k)\,=\, 0$ for $k+1\, \leq\, j\, \leq\, k+m$.
The lemma follows from these.
\end{proof}

\begin{definition}
Let $L$ be a Lie algebra of vector fields defined on an open subset $\mathcal{U}$ $\subset$ ${\mathbb C}^N$. The rank of $L$ is
$${\rm Max}_{p \in \mathcal{U}} \dim \{ X(p) \ \big\vert \ X \,\in\, L\}.$$
\end{definition}

In general the rank and dimension are different: The abelian Lie algebra 
$$\langle \partial_x,\, y\partial_x,\, \cdots,\, y^{n}\partial_x \rangle$$
has rank 1 and dimension $n+1$.

For a proof of the following basic fact see \cite{Lee}.

\begin{theorem}\label{thm 2.3}
If $X_1,\,\cdots ,\, X_r$ are commuting vector fields defined on an open subset $\mathcal{U}\, \subset\, {\mathbb C}^N$ and $p \,\in\, \mathcal{U}$
a point such that $X_{1}(p),\,\cdots
,\, X_{r}(p)$ are linearly independent, then there are local coordinates $x_1, \,\cdots , \, x_N$ defined near $p$ such that $X_{i} = \partial_{x_i}$ ($1 \leq i \leq r).$
\end{theorem}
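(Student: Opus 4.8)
The plan is to reduce the statement to the classical flow-box (straightening) theorem for a single vector field and then to assemble the $r$ flows into one coordinate chart, with the commutation hypothesis $[X_i,\,X_j]=0$ doing all the real work. Throughout I work in the analytic category: for each $i$ the flow $\phi^i$ of $X_i$ exists and is holomorphic in $(t,x)$ for $t$ near $0\in{\mathbb C}$ and $x$ near $p$, by the Cauchy existence theorem for holomorphic ODEs, and after shrinking I may assume all the flows are defined on one common neighborhood.

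The first, and genuinely non-formal, input I would record is the passage from commuting brackets to commuting flows: if $[X_i,\,X_j]\,=\,0$ then $\phi^i_t\circ\phi^j_s\,=\,\phi^j_s\circ\phi^i_t$ wherever both sides are defined. I would deduce this from uniqueness of integral curves, by observing that $t\mapsto(\phi^j_{-s}\circ\phi^i_t\circ\phi^j_s)(x)$ is an integral curve of the pushforward $(\phi^j_{-s})_*X_i$, which equals $X_i$ precisely because the bracket vanishes, and it has the same initial value as $t\mapsto\phi^i_t(x)$. This is the step I expect to be the main obstacle; everything afterward is the inverse function theorem plus bookkeeping.

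Next I would build the straightening map. Since $X_1(p),\,\cdots,\,X_r(p)$ are linearly independent, choose a holomorphic slice $S$ of dimension $N-r$ through $p$ whose tangent space at $p$ is complementary to ${\rm span}\{X_1(p),\,\cdots,\,X_r(p)\}$; concretely one takes $S\,=\,\{y_1=\cdots=y_r=0\}$ in suitable coordinates $y$ centered at $p$, with $S$ parametrized by $(z_{r+1},\,\cdots,\,z_N)$. Define
$$
\Phi(t_1,\,\cdots,\,t_r,\,z_{r+1},\,\cdots,\,z_N)\,\,=\,\,
\bigl(\phi^1_{t_1}\circ\cdots\circ\phi^r_{t_r}\bigr)(s),
$$
where $s\in S$ has coordinates $(z_{r+1},\,\cdots,\,z_N)$; this is holomorphic near the origin. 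Differentiating $\Phi$ in $t_i$ and using the commuting-flows fact to slide $\phi^i_{t_i}$ to the outermost position, one gets $\partial\Phi/\partial t_i\,=\,X_i(\Phi)$, so that $\partial_{t_i}$ corresponds to $X_i$ in the new coordinates.

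Finally I would verify invertibility and conclude. At the origin, $d\Phi$ sends $\partial_{t_i}\mapsto X_i(p)$ for $1\,\leq\,i\,\leq\,r$ and restricts to the identity on $T_pS$; since these images together span $T_p{\mathbb C}^N$, the differential $d\Phi$ is an isomorphism there. By the holomorphic inverse function theorem $\Phi$ is a biholomorphism from a neighborhood of the origin onto a neighborhood of $p$, so $(t_1,\,\cdots,\,t_r,\,z_{r+1},\,\cdots,\,z_N)$ provide local analytic coordinates $x_1,\,\cdots,\,x_N$ in which $X_i\,=\,\partial_{x_i}$ for $1\,\leq\,i\,\leq\,r$, as required.
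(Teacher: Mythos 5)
Your proof is correct and is essentially the standard argument: the paper itself gives no proof of this theorem, referring instead to Lee's \emph{Manifolds and Differential Geometry}, and the proof given there is exactly your route --- commuting brackets give commuting flows, the straightening map is built by composing the flows along a transversal slice, and the holomorphic inverse function theorem finishes the job.
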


Coordinates as in Theorem \ref{thm 2.3} are called \textit{canonical coordinates} for the commuting fields $X_1, \,\cdots , \,X_r$.

We will use Lemma \ref{lem2.1} and Theorem \ref{thm 2.3} to derive the local canonical form of Lie algebras of type $A_{1}\times A_{1} \times \cdots \times A_{1}$.
This is an essential ingredient for the proof of Theorem \ref{thm1}. 

\textbf{Notation}: If a Lie algebra $\mathfrak g$ is generated as a Lie algebra by $X_{1},\,\cdots ,\, X_{n}$, we write
${\mathfrak g}\,=\,\langle X_{1}, \,\cdots ,\, X_{n} \rangle$.

\begin{proposition}\label{prop2.4}
If $L \,=\, \stackrel{N{\rm -}times}{\overbrace{A_{1}\times A_{1} \times \cdots \times A_{1}}}$ is a Lie algebra of vector fields
on ${\mathbb C}^N$ then there are local coordinates $x_{1}, \,\cdots ,\, x_{N}$ in which 
$$L\ =\ \langle \exp (x_1)\partial_{x_1}, \exp (-x_1)\partial_{x_1} \rangle \times \cdots \times \langle \exp ({x_N})
\partial_{x_N},\, \exp(-x_N)\partial_{x_N} \rangle .$$
\end{proposition}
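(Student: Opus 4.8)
The plan is to normalize the Cartan subalgebra first, then integrate the eigenvalue equations it imposes on the root vectors, and finally exploit the vanishing of brackets between distinct simple factors to force each root vector along a single coordinate axis. Write $\mathfrak h = \langle H_1,\ldots,H_N\rangle$ for the Cartan subalgebra, where $H_i$ spans the Cartan of the $i$-th $A_1$ factor, and let $E_i,F_i$ be root vectors of that factor, so that $[H_i,E_i]=\alpha_i E_i$ and $[H_i,F_i]=-\alpha_i F_i$ with $\alpha_i\neq 0$, while $[H_j,E_i]=[H_j,F_i]=0$ for $j\neq i$ and every bracket between elements of different factors vanishes. By the canonical form recalled in the introduction, $\mathfrak h$ is realized by $N$ pointwise linearly independent commuting fields, so applying Theorem \ref{thm 2.3} to the basis $H_1,\ldots,H_N$ I obtain local coordinates $x_1,\ldots,x_N$ with $H_i=\partial_{x_i}$.

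Next I integrate the weight equations. Writing $E_i=\sum_k a_k\partial_{x_k}$, the relations $[\partial_{x_j},E_i]=0$ for $j\neq i$ show that every $a_k$ depends on $x_i$ alone, and $[\partial_{x_i},E_i]=\alpha_i E_i$ then gives $a_k=c_k e^{\alpha_i x_i}$. Thus $E_i=e^{\alpha_i x_i}V_i$ and, likewise, $F_i=e^{-\alpha_i x_i}W_i$, where $V_i=\sum_k v^i_k\partial_{x_k}$ and $W_i=\sum_k w^i_k\partial_{x_k}$ are nonzero constant vector fields. Lemma \ref{lem2.1} is convenient here to separate the variables and to carry the $\mathfrak{sl}_2$-relations of each factor down to the single coordinate it governs.

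The crux is to prove $V_i,W_i\parallel\partial_{x_i}$. Fix $j\neq i$ and suppose the $x_j$-component $v^i_j$ of $V_i$ is nonzero. Since $V_i,V_j,W_j$ are constant, one computes $[E_i,E_j]=e^{\alpha_i x_i+\alpha_j x_j}(\alpha_j v^i_j V_j-\alpha_i v^j_i V_i)$ and $[E_i,F_j]=e^{\alpha_i x_i-\alpha_j x_j}(-\alpha_j v^i_j W_j-\alpha_i w^j_i V_i)$, and both vanish because the $i$-th and $j$-th factors commute. The first identity forces $V_i\parallel V_j$ (and $v^j_i\neq 0$), and then the second forces $W_j\parallel V_j$. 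But $W_j\parallel V_j$ makes $[E_j,F_j]$ a scalar multiple of $V_j$, whereas $[E_j,F_j]$ is a nonzero multiple of $H_j=\partial_{x_j}$; a nonzero vector proportional both to $V_j$ and to $\partial_{x_j}$ must be proportional to $\partial_{x_j}$, so $V_j\parallel\partial_{x_j}$ and in particular $v^j_i=0$, contradicting $v^j_i\neq 0$. Hence $v^i_j=0$ for every $j\neq i$, i.e. $V_i=v^i_i\partial_{x_i}$ with $v^i_i\neq 0$; substituting this into the requirement $[E_i,F_i]\in\langle\partial_{x_i}\rangle\setminus\{0\}$ then yields $W_i=w^i_i\partial_{x_i}$ with $w^i_i\neq 0$ as well. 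This separation step is where I expect the real work to lie, since a priori nothing prevents the root directions of different factors from mixing.

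Finally I normalize. At this stage $E_i=p_i e^{\alpha_i x_i}\partial_{x_i}$ and $F_i=q_i e^{-\alpha_i x_i}\partial_{x_i}$ with $p_i,q_i\neq 0$. The coordinatewise change $x_i\mapsto x_i/\alpha_i$ is a diffeomorphism respecting the product structure and turns these into nonzero constant multiples of $e^{x_i}\partial_{x_i}$ and $e^{-x_i}\partial_{x_i}$; since rescaling the two generators of an $A_1$ by nonzero constants does not alter the Lie algebra they generate, $L$ takes exactly the asserted form.
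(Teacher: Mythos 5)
Your proof is correct, and while its first half coincides with the paper's, the decisive step is organized genuinely differently. Like the authors, you put the Cartan subalgebra into the form $\partial_{x_1},\dots,\partial_{x_N}$ via Theorem \ref{thm 2.3} (resting, as the paper does, on the rank result of \cite{ABM}), and you integrate the weight equations to write the root vectors as $e^{\pm\alpha_i x_i}$ times nonzero constant fields $V_i$, $W_i$. At that point the paper (i) proves the nonvanishing of the diagonal coefficients by a separate contradiction argument, (ii) uses Lemma \ref{lem2.1} to project the rank-two subalgebra $S_{12}$ isomorphically onto vector fields in the two variables $x_1,x_2$ alone, and (iii) finishes with an explicit two-variable coefficient computation culminating in \eqref{z1}. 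You instead argue directly in all $N$ variables: assuming an off-diagonal component $v^i_j\neq 0$, the vanishing of the cross-factor brackets $[E_i,E_j]$ and $[E_i,F_j]$, computed via formula \eqref{1}, forces $V_i\parallel V_j$ and $W_j\parallel V_j$, which collides with the fact that $[E_j,F_j]$ is a nonzero multiple of $H_j=\partial_{x_j}$. Your bracket computations check out, and the key fact you lean on is standard (it deserves one explicit line: $[E_j,F_j]$ centralizes $H_j$ within the $j$-th factor, hence lies in $\langle H_j\rangle$, and is nonzero since $\mathfrak{sl}_2$ is not solvable); the paper avoids having to state it by choosing generators with $H_i=[X_i,Y_i]$ from the outset. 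Your route has two small advantages: the nonvanishing of the diagonal coefficients falls out for free rather than needing its own step, and Lemma \ref{lem2.1} is never actually required --- your passing remark that it is ``convenient'' is decorative, since the parallelism argument lives entirely in ${\mathbb C}^N$. What the paper's detour through two variables buys is contextual: the projection device of Lemma \ref{lem2.1} is reused in the proof of Theorem \ref{thm1}, so setting it up here does double duty. Your closing normalization (rescaling $x_i$ to absorb $\alpha_i$, plus the observation that rescaling the two generators of an $A_1$ by nonzero constants does not change the Lie algebra they generate) is also correct, and is how you compensate for not normalizing the $\mathfrak{sl}_2$-triples at the start as the paper does.
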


\begin{proof}
The Lie algebra $L$ has generators $X_i$, $Y_i$ ($1 \leq i \leq N$) such that if we set $H_{i} = [X_{i}, Y_{i}]$, then
\begin{equation}\label{ec}
[H_{i}, \, X_{i}] \,=\, X_i, \ \, [H_{i},\, Y_{i}] \, =\, -Y_{i}
\end{equation}
for all $1\, \leq\, i\, \leq\, N$ and $[X_{i},\, X_{j}] \,=\, 0 \,=\, [Y_{i},\, Y_{j}]$ for all $1\, \leq\, i,\, j
\, \leq\, N$. The subalgebra $\langle H_{1}, \,\cdots,\, H_{N} \rangle$ is a Cartan subalgebra of $L$.
By Theorem \ref{thm 2.3} there are local coordinates $x_{1}, \,\cdots ,\, x_{N}$ such that $H_{i}\,=\, \partial_{x_i}$
(see also \cite[Theorem 2.1]{ABM}). The eigenfields for $\partial_{x_i}$ for eigenvalue $\lambda$
are $\exp (\lambda x_{i})(f_{1}\partial_{x_1}+ \ldots + f_{N}\partial_{x_N})$, where $f_{1}, \,\cdots , \,f_{N}$ are functions independent of $x_{i}$. As $[H_{j}, X_{i}] = 0 = [H_{j}, Y_{i}]$ for $i \neq j$, we see that $X_{i} \,=\, \exp (x_{i})(\lambda_{i1}\partial_{x_1} + \cdots +\lambda_{iN}\partial_{x_N})$,
 $Y_{i} \,=\, \exp (-x_{i})(\mu_{i1}\partial_{x_1} + \cdots +\mu_{iN}\partial_{x_N})$, where $\lambda_{ij}$ and $\mu_{ij}$ are constants.

We want to show that $\lambda_{ii}$ and $\mu_{ii}$ are not zero for all $1\, \leq\, i\,\leq\, N$
and $\lambda_{ij} \,=\, 0 \,=\, \mu_{ij}$ if $i \,\neq\, j$. To show that
$\lambda_{ii}$ and $\mu_{ii}$ are not zero for all $1\, \leq\, i\,\leq\, N$, it suffices to show this for $i\,=\,1$ ---
as this amounts to relabeling of variables.

We first show that $\lambda_{11}\,\not=\, 0$.

Suppose $\lambda_{11} = 0$. Then $$X_{1}\, =\, \exp (x_{1})(\lambda_{12}\partial_{x_2} + \cdots +\lambda_{1N}\partial_{x_N})\ \ \text{ and }\ 
\ Y_{1}\, =\, \exp (-x_{1})(\mu_{11}\partial_{x_1} + \cdots +\mu_{1N}\partial_{x_N}).$$ Using the formula 
\begin{equation}\label{1}
[\exp (\chi)U,\, \exp (\psi)V] \,=\, \exp (\chi + \psi)(U(\psi)V - V(\chi)U + [U,\, V]),
\end{equation}
where $\chi$ and $\psi$ are analytic functions, we see, using that $\lambda_{ij}$, $\mu_{ij}$ are constants, that
$H_{1}\, =\, [X_{1},\, Y_{1}] \,=\, -\mu_{11}(\lambda_{12}\partial_{x_2} + \cdots + \lambda_{1N}\partial_{x_N})$. Thus
$[H_{1},\, X_{1}] \,=\, 0$; but this contradicts \eqref{ec}.

Therefore $\lambda_{11} \,\neq\, 0$. Similarly $\mu_{11} \,\neq\, 0$. As mentioned above, this means that $\lambda_{ii}
\,\neq\, 0$ and $\mu_{ii} \,\neq \, 0$ for all $i$.

Now we want to show that $\lambda_{ij}\, =\, 0 \,=\, \mu_{ij}$ for all $i \,\neq\, j$.

Without loss of generality, we may suppose that $i \,=\, 1$ and $j \,=\, 2$.

The Lie algebra $$S_{12}\ = \ \langle \exp (x_{1})(\lambda_{11}\partial_{x_1} + \cdots +\lambda_{1N}\partial_{x_N}),\,
\exp (-x_{1})(\mu_{11}\partial_{x_1} + \cdots +\mu_{1N}\partial_{x_N})\rangle$$
$$\times 
\langle \exp (x_{2})(\lambda_{21}\partial_{x_1} + \cdots +\lambda_{2N}\partial_{x_N}),\,
\exp (-x_{2})(\mu_{21}\partial_{x_1} + \cdots +\mu_{2N}\partial_{x_N})\rangle$$ is a Lie subalgebra of vector fields of the type
$$f_{1}(x_{1}, x_{2})\partial_{x_1} + f_{2}(x_{1}, x_{2})\partial_{x_2} + \cdots + f_{N}(x_{1}, x_{2})\partial_{x_N}.$$
Note that $S_{12}$ is isomorphic to $A_{1} \times A_{1}$. By Lemma \ref{lem2.1}, the linear map of vector fields defined by
\begin{equation}\label{r1}
f_{1}(x_{1}, x_{2})\partial_{x_1} + f_{2}(x_{1}, x_{2})\partial_{x_2} + \cdots + f_{N}(x_{1}, x_{2})\partial_{x_N}
\ \longmapsto\ f_{1}(x_{1}, x_{2})\partial_{x_1} + f_{2}(x_{1}, x_{2})\partial_{x_2}
\end{equation}
is a homomorphism of Lie algebras.

As $\lambda_{11}$, $\mu_{11}$, $\lambda_{22}$, $\mu_{22}$ are all not zero, the Lie algebra $S_{12}$ is mapped isomorphically, by the map
in \eqref{r1}, onto its image
$$\langle \exp (x_{1})(\lambda_{11}\partial_{x_1} + \lambda_{12}\partial_{x_2}), \
\exp (-x_{1})(\mu_{11}\partial_{x_1} + \mu_{12}\partial_{x_2})\rangle \times
$$
$$
\langle \exp (x_{2})(\lambda_{21}\partial_{x_1} + \lambda_{22}\partial_{x_2}), \
\exp (-x_{2})(\mu_{21}\partial_{x_1} + \mu_{22}\partial_{x_2})\rangle,$$
which is of type $A_1\times A_1$.

To prove the proposition, it therefore suffices to show that if
$${\mathfrak g} \,=\, \langle \exp (x)(\partial_{x} + \lambda \partial_{y}),\, \exp (-x)(\partial_{x} + \mu \partial_{y})\rangle
\times \langle \exp (y)(\partial_{y}
+ \widetilde{\lambda} \partial_{x}),\, \exp (-y)(\partial_{y} + \widetilde{\mu} \partial_{x})\rangle ,$$
where $\lambda$, $\mu$, $\widetilde{\lambda}$ and $\widetilde{\mu}$ are constants,
is a Lie algebra of type $A_{1} \times A_{1}$ on $\mathbb{C}^{2}$ with Cartan subalgebra $\langle \partial_{x},\, \partial_{y} \rangle$,
then
\begin{equation}\label{es}
\lambda\,=\,\mu\,=\, \widetilde{\lambda}\,=\, \widetilde{\mu}\,=\, 0.
\end{equation}

Now $[\exp (x)(\partial_{x} + \lambda \partial_{y}),\, \exp (-x)(\partial_{x} + \mu \partial_{y})]$ should be a non zero multiple of $\partial_{x}$.

Using the formula \eqref{1}, we see that $\lambda + \mu \ =\ 0$. Similarly, we have $\widetilde{\lambda} + \widetilde{\mu}\ =\ 0$.
Consequently, we have
$${\mathfrak g}\ =\ \langle \exp (x)(\partial_{x} + \lambda \partial_{y}),\, \exp (-x)(\partial_{x} - \lambda \partial_{y})\rangle \times
\langle \exp (y)(\partial_{y} + \widetilde{\lambda} \partial_{x}),\, \exp (-y)(\partial_{y} - \widetilde{\lambda} \partial_{x})\rangle .$$

As $[\exp (x)(\partial_{x} + \lambda \partial_{y}),\, \exp (y)(\partial_{y} + \widetilde{\lambda} \partial_{x})] \,=\, 0$
we see using the formula \eqref{1} that $\lambda(\partial_y + \widetilde{\lambda}\partial_x) - \widetilde{\lambda}(\partial_x +\lambda\partial_y)\,=\, 0$.
Therefore,
\begin{equation}\label{z1}
\lambda\ =\ \lambda\widetilde{\lambda} \ =\ \widetilde{\lambda}.
\end{equation}
On the other hand, as $[\exp (x)(\partial_{x} + \lambda \partial_{y}), \,
\exp (-y)(\partial_{y} -\widetilde{\lambda} \partial_{x})]\, = \,0$, from \eqref{1} it follows that
$$- \lambda(\partial_y - \widetilde{\lambda}\partial_x) + \widetilde{\lambda}(\partial_x +\lambda\partial_y)\,=\, 0.$$
So considering the coefficient of $\partial_x$ we conclude that $\lambda\widetilde{\lambda}+\widetilde{\lambda}\,=\, 0$. This and
\eqref{z1} together imply that
$\lambda \,=\, 0$ and $\widetilde{\lambda}\,=\, 0$. This proves \eqref{es}. As noted before, \eqref{es} completes the
proof of the proposition.
\end{proof}

\begin{lemma}\label{lem2}
Any simple Lie algebra which is not of type $A_\ell$ for some $\ell$ contains a subalgebra of type $B_2$, $G_2$ or $D_4$.
\end{lemma}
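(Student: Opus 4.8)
The plan is to realize each of the claimed subalgebras as the one generated by a subset of the simple root vectors, and then to reduce the statement to a short graph-theoretic observation about Dynkin diagrams. Fix a Cartan subalgebra $\mathfrak h$ of the simple Lie algebra $\mathfrak g$, with root system $\Phi$ and a choice of simple roots $\Pi \subset \Phi$. I will use the standard fact (see \cite{Ki}) that for any subset $S \subseteq \Pi$ the vectors $e_\alpha,\, f_\alpha,\, h_\alpha$ with $\alpha \in S$ generate a semisimple subalgebra whose root system is $\Phi \cap \mathbb{Z}S$, whose simple roots are $S$, and whose Dynkin diagram is the full subgraph of the Dynkin diagram of $\mathfrak g$ supported on the nodes $S$. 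Thus it suffices to exhibit, inside the Dynkin diagram of any simple Lie algebra not of type $A_\ell$, an induced connected subgraph of type $B_2$, $G_2$ or $D_4$.

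Next I record the dichotomy that drives the case analysis: a connected Dynkin diagram is of type $A_\ell$ precisely when it is a path carrying only single bonds, that is, when it has neither a multiple bond nor a node of valence at least three. Consequently, if $\mathfrak g$ is simple and not of type $A_\ell$, its Dynkin diagram must contain either a multiple bond or a branch node (a node meeting three or more edges). I would treat these two possibilities in turn, and in each produce the required two- or four-node induced subgraph.

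If the diagram carries a triple bond, then $\mathfrak g$ is $G_2$ and the two endpoints of that bond already span a $G_2$ subgraph. If it carries a double bond, the two nodes joined by it span a subgraph of type $B_2$ (equivalently $C_2$, which is isomorphic to $B_2$). If instead the diagram has a branch node, I take that node together with its neighbours; in a finite-type diagram a branch node has valence exactly three and, since the only diagrams with a branch node are the simply-laced $D_\ell$ and $E_n$, it is incident only to single bonds. Because every Dynkin diagram is a tree, the three neighbours are mutually non-adjacent, so the induced subgraph on these four nodes is exactly the star $D_4$. Taking $S$ to be the corresponding set of simple roots and invoking the fact from the first paragraph then furnishes a subalgebra of the desired type.

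The only point genuinely requiring care, and the step I would double-check, is the passage from subdiagrams to subalgebras: one must be certain that the generated subalgebra has the induced subdiagram as its Dynkin diagram, rather than some strictly larger root system. This is why I rely on the explicit description of $\Phi \cap \mathbb{Z}S$ instead of attempting to build embeddings such as $\mathfrak{so}_5 \hookrightarrow \mathfrak{so}_{2\ell+1}$ by hand. The remaining inputs, namely that branch nodes carry only single bonds and have valence three, that the triple bond occurs only in $G_2$, and that finite Dynkin diagrams are trees, are all immediate from the list of connected Dynkin diagrams. Hence no delicate estimate or construction is involved, and the whole argument collapses to this short case check once the subdiagram principle is in place.
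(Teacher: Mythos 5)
Your proof is correct and takes essentially the same route as the paper: both arguments locate $B_2$, $G_2$ or $D_4$ as an induced subdiagram of the Dynkin diagram (multiple bond gives $B_2$ or $G_2$; otherwise one finds $D_4$) and pass from subdiagrams to subalgebras. The only real difference is that where the paper handles the simply-laced case by running through the classification ($D_n$ with $n\geq 4$ contains $D_4$; the exceptional types contain $E_6$, whose diagram minus two end vertices is $D_4$), you argue uniformly that a connected single-bond tree which is not a path has a trivalent node whose star is $D_4$ --- a slightly cleaner version of the same step, and you also make explicit the subdiagram-to-subalgebra principle that the paper uses implicitly.
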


\begin{proof}
If the Dynkin diagram of a given simple Lie algebra $\mathfrak g$ contains a multiple bond,
then $\mathfrak g$ contains a subalgebra of type $B_2$ or $G_2$. Assume that there is no multiple bond
in the Dynkin diagram of $\mathfrak g$.

If ${\mathfrak g}$ is of type $D_n$, with $n\, \geq\, 4$, then it must contain a subalgebra of type $D_4$.

If $\mathfrak g$ is exceptional, it contains a subalgebra of type $E_6$
whose Dynkin diagram is
\begin{center}
\begin{tikzpicture}[scale=.4]
 \draw (-1,1) node[anchor=east] {$ $};
 \foreach \x in {0,...,4}
 \draw[thick,xshift=\x cm] (\x cm,0) circle (3 mm);
 \foreach \y in {0,...,3}
 \draw[thick,xshift=\y cm] (\y cm,0) ++(.3 cm, 0) -- +(14 mm,0);
 \draw[thick] (4 cm,2 cm) circle (3 mm);
 \draw[thick] (4 cm, 3mm) -- +(0, 1.4 cm);
 \end{tikzpicture}
\end{center}
Removing the two extreme vertices of the row we get the Dynkin diagram of $D_4$. So
$\mathfrak g$ contains a subalgebra of type $D_4$.
\end{proof}

Part (1) of the following proposition is due originally to Lie in the sense that it is a consequence
of Lie's classification of finite dimensional subalgebras of
vector fields on ${\mathbb C}^2$ \cite{Li}. This classification is also listed in
\cite[p. 369--372]{AH}, \cite[p. 3]{GKO} and \cite[pp.~472--475]{Olr}.

\begin{proposition}\label{lem3}\mbox{}
\begin{enumerate}
\item There is no faithful representation of a Lie algebra of type $B_2$ or $G_2$ as analytic
vector fields on ${\mathbb C}^2$.

\item There is no faithful representation of a Lie algebra of type $D_4$ as
analytic vector fields on ${\mathbb C}^4$.
\end{enumerate}
\end{proposition}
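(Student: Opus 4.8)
The plan is to handle both parts through a single geometric principle — bounding the codimension of an isotropy subalgebra — while giving part (1) the shorter route through Lie's classification that the discussion preceding the statement has set up.

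First I would record the common mechanism. Suppose a simple Lie algebra $\mathfrak g$ acts faithfully by analytic vector fields on a connected open subset of $\mathbb C^n$. Since $\mathfrak g$ is simple and the homomorphism into vector fields is injective, the image is nonzero, so the rank $r$ of $\mathfrak g$ (in the sense of the definition of rank above) satisfies $1\le r\le n$. Choose a point $p$ at which $\dim\{X(p)\mid X\in\mathfrak g\}=r$. The evaluation map $\mathrm{ev}_p\colon X\mapsto X(p)$ then has rank $r$, and its kernel $\mathfrak g_p=\{X\mid X(p)=0\}$ is a subalgebra, because $X(p)=Y(p)=0$ forces $[X,Y](p)=0$. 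Hence $\mathfrak g_p$ is a \emph{proper} subalgebra of codimension exactly $r\le n$. Consequently, to forbid a faithful action on $\mathbb C^n$ it suffices to prove that $\mathfrak g$ has no proper subalgebra of codimension $\le n$.

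For part (1) I would follow the route indicated before the statement. A faithful realization of $B_2$ or $G_2$ as vector fields on $\mathbb C^2$ exhibits it as a finite dimensional Lie algebra of vector fields on $\mathbb C^2$, hence it must occur in Lie's classification \cite{Li} (see also \cite{AH,GKO,Olr}). Inspecting that list, the only simple or semisimple entries are of type $A_1$, $A_1\times A_1$ and $A_2=\mathfrak{sl}_3$; since $B_2$ and $G_2$ are simple of type different from $A_1,A_2$ (and of dimensions $10$ and $14$, exceeding $\dim\mathfrak{sl}_3=8$), neither can appear. Alternatively, the isotropy principle already settles (1): the minimal codimension of a proper subalgebra of $B_2$ (respectively $G_2$) equals the dimension $3$ (respectively $5$) of its smallest flag variety, and $3>2$, $5>2$.

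For part (2) no classification on $\mathbb C^4$ is available, so I would use the isotropy principle directly. A faithful action of $D_4=\mathfrak{so}_8$ on $\mathbb C^4$ produces a proper subalgebra $\mathfrak g_p\subsetneq\mathfrak{so}_8$ of codimension $r\le 4$, i.e. of dimension $\ge 24$; it therefore suffices to show that $\mathfrak{so}_8$ has no proper subalgebra of codimension $\le 5$. Every proper subalgebra lies in a maximal one, and the maximal subalgebras of $\mathfrak{so}_8$ are the maximal parabolics and the reductive maximal subalgebras. The maximal parabolics have codimension $6$, $6$ and $9$ — the two smallest being the stabilizers of an isotropic line and of a spinor, with homogeneous spaces the $6$-dimensional quadric and the two $6$-dimensional spinor varieties — while every reductive maximal subalgebra has codimension $\ge 7$ (for instance $\mathfrak{so}_7$, of codimension $7$). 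Thus the minimal codimension of a proper subalgebra of $\mathfrak{so}_8$ is $6>4$, a contradiction. \emph{This codimension bound is the main obstacle}: it is the one step where I must invoke the classification of maximal subalgebras (equivalently, the minimal dimensions of homogeneous spaces), drawing on the representation-theoretic and root-system facts of \cite{Ki}.

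If instead one wants a self-contained computation, I would argue as in Proposition \ref{prop2.4}. After placing the Cartan subalgebra in the canonical form $\partial_{x_i}$, each root vector becomes $E_\gamma=\exp(\gamma(x))\,v_\gamma$ with $v_\gamma$ a constant-coefficient vector field, and brackets of such fields are governed by the formula \eqref{1}. Choosing inside $D_4$ the subalgebra of type $A_1\times A_1\times A_1\times A_1$ (the copy of $\mathfrak{so}_4\oplus\mathfrak{so}_4$ given by the four mutually orthogonal roots $e_1-e_2,\ e_1+e_2,\ e_3-e_4,\ e_3+e_4$) and normalizing it by Proposition \ref{prop2.4}, I would then impose the bracket relations between the remaining sixteen root vectors and this $A_1^4$, together with Lemma \ref{lem2.1}, to pin down the $v_\gamma$; the resulting system is over-determined and forces an inconsistency. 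This route is elementary but computationally heavy, and the genuine difficulty — the impossibility of fitting the $24$ roots of $D_4$ into $\mathbb C^4$ — is precisely what the codimension count makes transparent.
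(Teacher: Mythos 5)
Your proposal is correct, but it takes a genuinely different route from the paper's own proof, so let me compare the two. The paper argues self-containedly: inside $B_2$ or $G_2$ it takes the $A_1\times A_1$ spanned by the orthogonal roots $\alpha,\ \alpha+2\beta$ (and inside $D_4$ the $A_1\times A_1\times A_1\times A_1$ spanned by $\alpha,\gamma,\delta$ and the highest root $\alpha_0$ --- the same four orthogonal roots you chose), puts it in the canonical form $S$ of Proposition \ref{prop2.4}, and then uses complete reducibility: an $S$-invariant complement of $S$ in $\mathfrak g$ would contain a highest weight vector $v$ annihilated by the derived algebra of a Borel subalgebra of $S$, i.e.\ by the commuting fields $\exp(x_i)\partial_{x_i}$; rectifying these to $\partial_{\widetilde x_i}$ via Theorem \ref{thm 2.3} forces $v$ to have constant coefficients, hence $v\in\langle \partial_{\widetilde x_i}\rangle\subset S$, a contradiction. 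Your isotropy mechanism is a correct alternative: the kernel of evaluation at a point of maximal rank is indeed a proper subalgebra of codimension $r\le n$, and the minimal codimensions of proper subalgebras of $B_2$, $G_2$, $\mathfrak{so}_8$ are indeed $3$, $5$, $6$, so all cases are excluded uniformly (a small slip: $\mathfrak{so}_8$ has four conjugacy classes of maximal parabolics, of codimensions $6,6,6,9$ --- the three outer nodes are related by triality --- not ``$6$, $6$ and $9$'', though this does not affect the minimum). What your route buys is brevity, uniformity across both parts, and independence from Lie's planar lists even in part (1); what it costs is exactly the step you flagged: it needs the theorem that a maximal subalgebra of a simple Lie algebra is parabolic or reductive, together with the classification of reductive maximal subalgebras of $\mathfrak{so}_8$ (all of codimension $\ge 7$, attained by $\mathfrak{so}_7$). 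These facts are true and classical, but they are heavy classification input, and they are not in \cite{Ki}, the only source you invoke --- a complete write-up would need a genuine reference (e.g.\ Dynkin's work on maximal subalgebras). The paper's proof deliberately avoids any such input, using only its own Proposition \ref{prop2.4} and elementary highest-weight theory. Finally, your closing ``self-contained computation'' is only a sketch --- the claimed over-determined inconsistency is asserted, not derived --- and, if pushed through, the efficient way to extract the contradiction from that setup is precisely the paper's highest-weight-vector argument rather than brute-force bracket relations.
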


\begin{proof}
We will first show that there is no faithful representation of a Lie algebra of
type $B_2$ and $G_2$ in vector fields on ${\mathbb C}^2$.

To see this, if the positive roots of $B_2$ are $\alpha$, $\beta$, $\alpha +\beta$ and $\alpha +2\beta$, then
subalgebra of $B_2$ with roots
$$
\langle\alpha,\, \alpha+2\beta,\, -\alpha,\, -\alpha-2\beta\rangle
$$
is of type $A_1\times A_1$.

Similarly, if $G_2$ is a Lie subalgebra of Lie algebra of vector
fields on ${\mathbb C}^2$ and the positive roots are $\alpha$, $\beta$,
$\alpha +\beta$, $\alpha +2\beta$, $\alpha +3\beta$ and $2\alpha +3\beta$, then
the Lie subalgebra of $G_2$ with roots
$$
\langle \alpha, \, \alpha +2\beta,\, -\alpha,\, -\alpha -2\beta\rangle
$$
is of type $A_1\times A_1$. By Proposition \ref{prop2.4}, any Lie algebra of vector fields on ${\mathbb C}^2$ 
of type $A_1\times A_1$ is equivalent to
$$
S\, =\, \langle \exp (x)\partial_x,\, \exp (-x)\partial_x,\, \partial_x\rangle \times
\langle \exp (y)\partial_y,\, \exp (-y)\partial_y ,\, \partial_y \rangle.
$$
Therefore, $\langle \exp (x)\partial_x,\, \exp (y)\partial_y\rangle$ is a rank two abelian Lie subalgebra
in the derived algebra of a unique Borel subalgebra of $S$; this Borel subalgebra of $S$ is denoted by $\mathfrak b$.

Thus if $S$ is contained in a Lie subalgebra ${\mathfrak g}$ of vector
fields on ${\mathbb C}^2$, then
$\mathfrak g$ must have a highest weight vector $v$ in an $S$-invariant complement
of $S$ in $\mathfrak g$.

The derived algebra of the above Borel subalgebra $\mathfrak b$ of $S$ is $$\langle\exp (x)\partial_x,\ \exp (y)\partial_y\rangle .$$
In the coordinates $\widetilde{x},\, \widetilde{y}$ in which
$$
\exp (x)\partial_x\,=\, \partial_{\widetilde x}\ \ \, \text{ and }\ \ \,
\exp (y)\partial_y\,=\, \partial_{\widetilde y},
$$
we see that $[\partial_{\widetilde x},\, v]\,=\, 0\,=\, [\partial_{\widetilde y},\, v]$, as $v$ is
the above highest weight vector and a highest weight vector is, by definition, in the null space of the
derived algebra of a Borel subalgebra $\mathfrak b$. Now $v$ is of the form 
$$v=f(\widetilde{x},\widetilde{y})\partial_{\widetilde{x}}+g(\widetilde{x},\widetilde{y})\partial_{\widetilde{y}}.$$
Therefore,
$[\partial_{\widetilde{x}},\,v] \,=\,0\,=\, [\partial_{\widetilde{y}},\,v]$ implies that $f$ and $g$ are constant functions.
Hence
$$
v\, \in\, \langle \partial_{\widetilde x},\,\partial_{\widetilde y}\rangle,
$$
which is a contradiction.

Thus the only semisimple Lie algebras of vector fields on ${\mathbb C}^2$ are
of the type $A_1$, $A_1\times A_1$ and $A_2$.

Now we will prove that there is no faithful representation of a Lie algebra of type $D_4$ as vector fields on ${\mathbb C}^4$.

We note that if $\alpha,\, \beta,\, \gamma,\, \delta$ are the simple roots of $D_4$
$$
\begin{matrix}
&& \delta\\
&&\bullet\\
&& \vert\\
&& \vert\\
\bullet& ---& \bullet & --- &\bullet\\
\alpha && \beta && \gamma
\end{matrix}
$$
with $\alpha,\, \gamma,\, \delta$ orthogonal and
$$
\langle \alpha,\, \beta\rangle\,=\,\langle \beta,\, \gamma\rangle\,=\,
\langle \beta,\, \delta \rangle\,=\,-1,
$$
the highest root is $\alpha+2\beta+\gamma+\delta\,=:\, \alpha_0$ and $\alpha_0$ is orthogonal
to $\alpha,\, \gamma,\, \delta$, and thus the collection
$$
\{\alpha,\, \gamma,\, \delta,\, \alpha_0,\, -\alpha,\, -\gamma,\, -\delta,\, -\alpha_0\}
$$
corresponds to a Lie subalgebra of type $A_1\times A_1\times A_1\times A_1$. Its realization as vector fields on
${\mathbb C}^4$ is
$$
S\ =\ \prod_{i=1}^4 \langle \exp(x_i)\partial_{x_i},\, \exp(-x_i)\partial_{x_i},\, \partial_{x_i}\rangle ,
$$
with respect to some coordinates $x_1,\, x_2,\, x_3,\, x_4$,
and $$\langle \exp(x_1)\partial_{x_1},\, \exp(x_2)\partial_{x_2},\,
\exp(x_3)\partial_{x_3},\, \exp(x_4)\partial_{x_4}\rangle$$ is a rank $4$ abelian Lie subalgebra in the
derived algebra of a Borel subalgebra of $S$.
By the same argument as before, there is no highest weight vector in its complement in
$D_4$. This proves that
there is no faithful representation of $D_4$ in vector fields on ${\mathbb C}^4$.
\end{proof}

The following proposition is also due to Lie in the sense
that it is a consequence of Lie's classification of finite dimensional subalgebras of
vector fields on ${\mathbb C}^2$ \cite{Li}; see the lists in \cite[p. 369--372]{AH}, \cite[p. 3]{GKO}
and \cite[pp.~472--475]{Olr}.

\begin{proposition}\label{cor2}
All simple Lie algebras of vector fields on ${\mathbb C}^2$ must be of type $A_1$ or $A_2$.
\end{proposition}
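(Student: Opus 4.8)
The plan is to show that a simple Lie algebra $\mathfrak g$ of vector fields on $\mathbb C^2$ is necessarily of type $A_\ell$, and then that $\ell\,\le\, 2$. The first step follows directly from the results already established. If $\mathfrak g$ were not of type $A_\ell$, then by Lemma \ref{lem2} it would contain a subalgebra of type $B_2$, $G_2$ or $D_4$, and this subalgebra would itself be a faithful realization as vector fields on $\mathbb C^2$. For $B_2$ and $G_2$ this is ruled out immediately by Proposition \ref{lem3}(1). For $D_4$ I would note that a faithful realization by vector fields in the two variables $x_1,\, x_2$ is simultaneously a faithful realization on $\mathbb C^4\,=\,\{(x_1,\, x_2,\, x_3,\, x_4)\}$ --- the same fields, viewed as having no dependence on $x_3,\, x_4$ and no $\partial_{x_3},\, \partial_{x_4}$ components --- which contradicts Proposition \ref{lem3}(2). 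Hence $\mathfrak g$ is of type $A_\ell$.

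It remains to exclude $\ell\, \ge\, 3$. The relevant root-system fact is that $A_\ell$ with $\ell\, \ge\, 3$ contains a pair of orthogonal roots, for instance $e_1-e_2$ and $e_3-e_4$, and therefore a subalgebra of type $A_1\times A_1$; by contrast $A_2$ has no two orthogonal roots, and this is exactly why $A_2$ (and $A_1$) will escape the argument. Assuming $\ell\, \ge\, 3$, the subalgebra $A_1\times A_1\,\subset\, \mathfrak g$ is a Lie algebra of vector fields on $\mathbb C^2$, so by Proposition \ref{prop2.4} a local change of coordinates puts it into the standard form $S\,=\,\langle \exp(x)\partial_x,\, \exp(-x)\partial_x,\, \partial_x\rangle\times\langle \exp(y)\partial_y,\, \exp(-y)\partial_y,\, \partial_y\rangle$.

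At this point I would repeat, exactly as in the proof of Proposition \ref{lem3}, the highest-weight-vector argument. Since $S$ is semisimple and $\dim \mathfrak g\, >\, 6\,=\,\dim S$, the adjoint action of $S$ decomposes $\mathfrak g\,=\, S\oplus V$ with $V$ a nonzero $S$-module, which therefore contains a highest weight vector $v$. This $v$ is annihilated by the derived algebra $\langle \exp(x)\partial_x,\, \exp(y)\partial_y\rangle$ of the Borel subalgebra of $S$; passing to coordinates $\widetilde x,\, \widetilde y$ in which these two fields become $\partial_{\widetilde x},\, \partial_{\widetilde y}$, the conditions $[\partial_{\widetilde x},\, v]\,=\, 0\,=\,[\partial_{\widetilde y},\, v]$ force $v$ to have constant coefficients, so $v\,\in\,\langle\partial_{\widetilde x},\, \partial_{\widetilde y}\rangle\,\subset\, S$, contradicting $0\,\ne\, v\,\in\, V$. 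Thus $\ell\, \le\, 2$, and $\mathfrak g$ is of type $A_1$ or $A_2$.

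The main obstacle is the passage from ``type $A_\ell$'' to ``$\ell\,\le\, 2$'': Lemma \ref{lem2} and Proposition \ref{lem3} only dispose of the non-$A$ types, so the genuinely new ingredient is locating an orthogonal $A_1\times A_1$ inside $A_\ell$ and re-deploying the highest-weight obstruction. One should check that applying the normalization of Proposition \ref{prop2.4} to this sub-$A_1\times A_1$ leaves the ambient ``vector fields on $\mathbb C^2$'' hypothesis intact, and that the absence of orthogonal roots in $A_2$ indeed keeps $A_2$ out of reach of the contradiction.
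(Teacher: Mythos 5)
Your proof is correct, but it takes a genuinely different route from the paper's. The paper's own proof is essentially two sentences: by the main result of \cite{ABM}, the dimension of a Cartan subalgebra of a semisimple Lie algebra of vector fields on ${\mathbb C}^2$ is at most two, so the only simple candidates are $A_1$, $A_2$, $B_2$ and $G_2$; Proposition \ref{lem3}(1) then eliminates $B_2$ and $G_2$. In particular, $A_\ell$ with $\ell\,\geq\,3$ and $D_4$ never need to be confronted in the paper --- the external rank bound kills everything of rank at least three in one stroke. You never invoke that rank bound. Instead you dispose of the non-$A$ types through Lemma \ref{lem2} and Proposition \ref{lem3}, handling the $D_4$ case by the observation that vector fields in the variables $x_1,\,x_2$ form a Lie subalgebra of the vector fields on ${\mathbb C}^4$, so a faithful $D_4$ on ${\mathbb C}^2$ would contradict Proposition \ref{lem3}(2); and you exclude $A_\ell$ with $\ell\,\geq\,3$ by locating the orthogonal pair of roots $e_1-e_2$, $e_3-e_4$, hence a subalgebra of type $A_1\times A_1$, normalizing it via Proposition \ref{prop2.4}, and re-running the highest-weight obstruction from the proof of Proposition \ref{lem3}: complete reducibility gives ${\mathfrak g}\,=\,S\oplus V$ with $V\,\neq\,0$, a highest weight vector of $V$ is annihilated by $\exp(x)\partial_x$ and $\exp(y)\partial_y$, hence has constant coefficients in the rectifying coordinates $\widetilde x,\,\widetilde y$, hence lies in $S$, a contradiction. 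All steps check out (the orthogonal $A_1\times A_1$ does commute since neither the sum nor the difference of the two roots is a root, and the pushforward under the normalizing biholomorphism indeed preserves the ambient hypothesis). The trade-off: the paper buys brevity at the cost of importing the full strength of \cite{ABM} at this point, while your argument is longer but self-contained modulo the paper's internal propositions, and it shows that the highest-weight obstruction is strong enough by itself to cap the rank of the $A$-type algebras --- in effect re-proving the relevant case of the rank bound rather than citing it.
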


\begin{proof}
By the main result of \cite{ABM}, the rank of such a Lie algebra can be at most two. Now Lemma \ref{lem2} and
Proposition \ref{lem3} together complete the proof.
\end{proof}

\section{Proof of Theorem \ref{thm1}}

By \cite[Theorem 2.1]{ABM}, if $\mathcal G$ is a semisimple Lie algebra of vector fields on ${\mathbb C}^N$, and
$C\, \subset\, {\mathcal G}$ is a Cartan subalgebra of dimension $n$ of $\mathcal G$, then there are
coordinates $x_1,\, \cdots,\, x_N$ so that the root spaces corresponding to a simple set of roots and their
negatives are of the form $\exp(x_i)V_i,\, \exp(-x_i)W_i$, $i\,=\, 1,\, \cdots, \, n$, where $V_i$ and $W_i$
are vector fields whose coefficients with respect to the basis $\partial_{x_1},\, \cdots,\, \partial_{x_N}$
are independent of $x_1,\, \cdots,\, x_n$ and the linear span of the vector fields $[\exp(x_i)V_i,\,
\exp(-x_i)W_i]$, $1\, \leq\, i\, \leq\, n$ is that of $\partial_{x_1},\, \cdots,\, \partial_{x_n}$,
and the Lie algebra generated by $\exp(x_i)V_i,\, \exp(-x_i)W_i$ is a copy of $sl(2,{\mathbb C})$ for
every $i\,=\, 1,\, \cdots, \, n$.

Thus in case the Cartan subalgebra is of dimension $N$, the vector fields $V_i,\, W_i$ are constant ones.
Consequently, if $\alpha_1,\, \cdots,\, \alpha_N$ are the simple roots, the corresponding root vectors are
vector fields of the form
\begin{equation}\label{e1}
X_{\alpha_i}\,=\, \exp(x_i)\left(\sum_{j=1}^N \lambda_{i,j}\partial_{x_j}\right),\ \
X_{-\alpha_i}\,=\, \exp(-x_i)\left(\sum_{j=1}^N \mu_{i,j}\partial_{x_j}\right),
\end{equation}
where $\lambda_{i,j}$ and $\mu_{i,j}$, $1\,\leq\, i,\, j\,\leq\, N$, are constants with $\lambda_{i,i}\, \not=\,
0\, \not=\, \mu_{i,i}$ for every $i$ as shown in the proof of Proposition \ref{prop2.4}.

Now suppose $J$ is a set of simple roots. By permuting $\alpha_1,\, \cdots,\, \alpha_N$ we may assume that
$$
J\,\,=\,\, \{\alpha_1,\, \cdots,\, \alpha_k\}.
$$
Note that $\{X_{\alpha_i},\, X_{-\alpha_i}\}_{i=1}^k$ (see \eqref{e1}) generate a semisimple Lie algebra
\begin{equation}\label{e2}
{\mathcal G}_J\, \subset\, {\mathcal G}.
\end{equation}
Moreover,
$${\mathfrak g}_{J}\ =\ {\mathfrak b}_{J}\oplus {\mathfrak c}_{J} \oplus {\mathfrak b}_{-J},$$
where ${\mathfrak c}_J$ is a Cartan subalgebra of ${\mathfrak g}_{J}$ and ${\mathfrak b}_{J}$, ${\mathfrak b}_{-J}$ are
Lie subalgebras obtained by taking commutators
of all possible order starting from the root vectors corresponding to the simple roots and their negatives.
The formula
$$
[\exp(\chi)V,\,\, \exp(\psi)W]\,=\, \exp(\chi+\psi)([V,\,\, W]+ V(\psi)W- W(\chi)V)
$$
gives that
$$
[\exp(\chi)V,\,\, \exp(\psi)W]\,=\, \exp(\chi+\psi)(V(\psi)\cdot W- W(\chi)\cdot V)
$$
if $V$ and $W$ are constant vector fields. Consequently, ${\mathcal G}_J$ in \eqref{e2} is
a Lie subalgebra of the Lie algebra of all vector fields of the type
$$
\sum_{i=1}^N f_i(x_1,\, \cdots,\, x_k) \partial_{x_i}\, ,
$$
and the projection
$$
\sum_{i=1}^N f_i(x_1,\, \cdots,\, x_k) \partial_{x_i}\, \,\longmapsto\,
\sum_{i=1}^k f_i(x_1,\, \cdots,\, x_k) \partial_{x_i}\, ,
$$
is a homomorphism of Lie algebras by Lemma \ref{lem2.1}. Consequently,
${\mathcal G}_J$ in \eqref{e2} is isomorphic to a Lie subalgebra of Lie algebra of vector fields on ${\mathbb C}^k$.

Suppose ${\mathcal G}_J$ is not of type $A_\ell$.

By Lemma \ref{lem2}, either ${\mathcal G}_J$ is of type $G_2$ or it has a Lie subalgebra of type
$B_2$ or $D_4$. Now applying Lemma \ref{lem2.1} we see that either we have a faithful representation of a Lie algebra of type $G_2$ or
$B_2$ as a Lie algebra of vector fields on ${\mathbb C}^2$ of rank two, or we have
a faithful representation of a Lie algebra of type $D_4$ as a Lie algebra of vector fields on ${\mathbb C}^4$ of rank four. But by
Proposition \ref{lem3} none of these representations exist. Thus every simple components
of $\mathcal G$ must of type $A_\ell$.

The faithful representations of $A_k$ in the Lie algebra of vector fields on ${\mathbb C}^k$ are already given in
Corollary 3.2 of \cite{ABM}. This describes all faithful representations of maximal rank in the Lie algebra of
vector fields on ${\mathbb C}^N$.

\section{An application}

A corollary of Theorem \ref{thm1} is the following; it is due originally to Lie in the sense that it can be obtained by inspection 
of the lists in \cite[p. 369--372]{AH} and \cite[p. 3]{GKO}.

Recall that every finite dimensional Lie algebra $\mathfrak g$ has a Levi decomposition ${\mathfrak g}\,=\, S\ltimes R$, where
$S$ --- the Levi complement --- is semisimple and $R$ --- the radical --- is a solvable ideal.

\begin{corollary}\label{cor1}
If $\mathfrak g$ is a Lie algebra of vector fields on ${\mathbb C}^2$ with a proper Levi decomposition,
then the Levi complement must be $sl(2,{\mathbb C})$.
\end{corollary}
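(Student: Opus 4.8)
The plan is to exploit the Levi decomposition $\mathfrak{g} = S \ltimes R$ together with the classification of semisimple Lie algebras of vector fields on $\mathbb{C}^2$ already obtained in this paper. Since $S$ is a semisimple Lie algebra of vector fields on $\mathbb{C}^2$, Proposition~\ref{cor2} tells us that every simple factor of $S$ is of type $A_1$ or $A_2$; combined with the observation embedded in the proof of Proposition~\ref{lem3} that the only semisimple Lie algebras of vector fields on $\mathbb{C}^2$ are of type $A_1$, $A_1 \times A_1$, and $A_2$, the candidate Levi complements are precisely these three. The hypothesis that the Levi decomposition is \emph{proper} means $R \neq \{0\}$, so $\mathfrak{g}$ strictly contains $S$, and the goal is to rule out $S$ being of type $A_1 \times A_1$ or $A_2$, leaving $S \cong sl(2,\mathbb{C})$ (type $A_1$).

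First I would treat the case $S$ of type $A_2$. A Lie algebra of type $A_2$ acting as vector fields on $\mathbb{C}^2$ already has rank two, hence its values at a generic point span the full tangent space $\mathbb{C}^2$; since the radical $R$ is a nontrivial solvable ideal, I would examine the adjoint action of $S$ on $R$ and argue, using that $\mathfrak{g}$ must still be realizable as finitely many vector fields on $\mathbb{C}^2$, that no room remains for a nonzero solvable ideal. Concretely, an $A_2$ realization is the maximal-rank one coming from $sl(3,\mathbb{C})$ acting on $\mathbb{C}^2$, and I would show that its normalizer inside all vector fields on $\mathbb{C}^2$ contains no proper solvable extension, so $R$ would have to be central and hence zero by semisimplicity considerations.

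Second I would rule out $S$ of type $A_1 \times A_1$ by a similar rank argument: by Proposition~\ref{prop2.4}, such an $S$ is equivalent to $\langle \exp(x)\partial_x, \exp(-x)\partial_x, \partial_x \rangle \times \langle \exp(y)\partial_y, \exp(-y)\partial_y, \partial_y \rangle$, which again has rank two and fills the tangent space generically. I would then study the $S$-module structure of a hypothetical nonzero solvable ideal $R$: decomposing $R$ into irreducible $S$-submodules and using the highest-weight-vector argument from the proof of Proposition~\ref{lem3} (that any $S$-equivariant vector field lying in a complement must, in the straightening coordinates, be annihilated by the derived Borel and hence be constant), I would conclude that every such submodule collapses into the Cartan directions already present in $S$, forcing $R \subseteq S$ and thus $R = \{0\}$, contradicting properness.

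The main obstacle will be the $A_2$ case, since there the available structural lemmas (Proposition~\ref{prop2.4}, Lemma~\ref{lem2.1}) are tailored to products of $A_1$'s and to the projection homomorphism, and they do not directly produce the required constraint on $R$. The cleanest route is likely to invoke the full Lie classification of finite-dimensional Lie algebras of vector fields on $\mathbb{C}^2$ (cited here from \cite{Li}, \cite[p.~369--372]{AH}, \cite[p.~3]{GKO}), which is exactly how the corollary is attributed to Lie: by inspection of those lists, no entry has Levi complement of type $A_2$ or $A_1 \times A_1$ together with a nonzero radical, so the only proper Levi decomposition on $\mathbb{C}^2$ has $S \cong sl(2,\mathbb{C})$. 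I expect the final proof to combine this classification-by-inspection with the intrinsic rank and highest-weight arguments above to make the exclusion self-contained.
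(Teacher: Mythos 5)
Your reduction to the three candidates $A_1$, $A_1\times A_1$, $A_2$ is fine, and your treatment of the $A_1\times A_1$ case is essentially the paper's own argument: the radical $R$ is a nonzero finite-dimensional $S$-module (complete reducibility via Weyl's theorem), so it contains a highest weight vector $v$ annihilated by the derived algebra of a Borel subalgebra of $S$; for the canonical form of Proposition \ref{prop2.4} that derived algebra is the rank-two abelian algebra $\langle \exp(x)\partial_x,\, \exp(y)\partial_y\rangle$, and straightening it forces $v$ to be a constant combination of the straightened fields, hence $v\,\in\, S\cap R\,=\,\{0\}$, a contradiction.

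The genuine gap is the $A_2$ case, and you have correctly identified it as the obstacle but not closed it. Your intrinsic argument there --- that the normalizer of the $sl(3,{\mathbb C})$ realization inside all vector fields on ${\mathbb C}^2$ admits no proper solvable extension --- is unsubstantiated: it presupposes knowing all realizations of $A_2$ on ${\mathbb C}^2$ (itself a classification problem) and gives no mechanism for computing such a normalizer; none of the available tools (Lemma \ref{lem2.1}, Proposition \ref{prop2.4}, Proposition \ref{lem3}) does this. Your fallback --- inspecting Lie's lists in \cite{Li}, \cite{AH}, \cite{GKO} --- assumes exactly the classification the corollary is meant to be re-derived from the paper's own machinery, so as a self-contained proof it amounts to assuming the answer. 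What you are missing is the one observation that makes $A_2$ behave exactly like $A_1\times A_1$: the derived algebra of a Borel subalgebra of $sl(3,{\mathbb C})$ is a Heisenberg algebra $\langle X,\,Y,\,Z\rangle$ with $[X,\,Y]\,=\,Z$ and $Z$ central in it, and any realization of it by vector fields on ${\mathbb C}^2$ must contain a rank-two \emph{abelian} subalgebra. Indeed, choosing coordinates with $Z\,=\,\partial_x$, one has $X\,=\,f_1(y)\partial_x+g_1(y)\partial_y$ and $Y\,=\,f_2(y)\partial_x+g_2(y)\partial_y$; if both $\langle Z,\,X\rangle$ and $\langle Z,\,Y\rangle$ had rank one, then $g_1\,=\,g_2\,=\,0$ and $[X,\,Y]\,=\,0\,\neq\,Z$, a contradiction, so one of these abelian pairs has rank two. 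Since highest weight vectors are annihilated by the whole derived Borel, the same straightening argument you ran for $A_1\times A_1$ then puts the highest weight vector of $R$ inside $S$, killing the radical. With this lemma your proof closes; without it, your exclusion of $A_2$ rests on nothing.
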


\begin{proof}
Let ${\mathfrak g}\, \subset\, V(\mathbb{C}^{2})$ be a finite dimensional Lie algebra of vector fields on ${\mathbb C}^2$
with Levi decomposition
$${\mathfrak g}\ =\ S\ltimes R .$$ Assume that both $S$ and $R$ are not zero.

By adopting the arguments given in the proof of \cite[Theorem 2.1]{ABM}, as all Cartan subalgebras of a complex
semisimple Lie algebra are split, we see that for any complex semisimple Lie algebra
of vector fields on $\mathbb{C}^{N}$, its Cartan subalgebra can be of dimension at most $N$.

Thus $S$ is of rank 1 or 2. If S is of rank 
2, then it is of type $A_1\times A_1$ or $A_{2}$ --- by Proposition \ref{cor2}. We have already seen that any Lie
algebra of type $A_1\times A_1$ in $\mathbb{C}^{2}$ contains the abelian Lie algebra $\langle 
\exp(x)\partial_{x},\exp(y)\partial_{y} \rangle$ --- for suitable local coordinates $x,\, y$ --- in the derived algebra of a Borel subalgebra.

A Lie algebra of type $A_{2}$ is isomorphic to $sl(3,\mathbb{C})$ and the Lie subalgebra of upper triangular matrices
is a Borel subalgebra of $sl(3,\mathbb{C})$. The derived algebra of this Borel subalgebra of
$sl(3, {\mathbb C})$ has generators $X,\, Y,\, Z$ such that $[X,\,Y]\,=\,Z$
with $Z$ commuting with both $X$ and $Y$. Choose coordinates in which $Z\,=\,\partial_{x}$.
Thus $X\,=\, f_{1}(y)\partial_{x}+g_{1}(y)\partial_{y}$
and $Y\,=\,f_{2}(y)\partial_{x}+g_{2}(y)\partial_{y}$. If both the Lie algebras $\langle Z, \,X \rangle$ and $\langle Z,\, Y \rangle$
are of rank 1, say 
$X\,=\,f_{1}(y)\partial_{x}$,\, $Y\,=\,f_{2}(y)\partial_{x}$, then we have
$[X,\, Y]\,=\,0$. Thus, one of $\langle Z,\, X\rangle$ or $\langle Z,\, Y\rangle$ must be of rank 2.
Consequently, the derived algebra of a Borel subalgebra of both the Lie algebras of type $A_1\times A_1$ and $A_2$ contains an abelian
Lie algebra of rank 2. In the canonical coordinates $x,y$ of this abelian Lie
algebra, any highest weight vector in $V(\mathbb{C}^{2})$ must be in the Lie algebra $\langle \partial_{x},\, \partial_{y} \rangle$.

Thus the Levi complement $S$ must be of type $A_{1}$. Therefore, the only Lie algebras on $\mathbb{C}^{2}$ with a proper Levi
decomposition must be of the form $sl(2,\mathbb{C})\rtimes R$, where $R$ is the radical of the Lie algebra.
\end{proof}

Theorem \ref{thm1} reduces the classification of semisimple Lie algebras of vector fields on ${\mathbb 
C}^3$ and those Lie algebras with a proper Levi decomposition to the representations of ranks one and two 
in the vector fields on ${\mathbb C}^3$; proofs of this classification will appear elsewhere.
(Here by rank we mean the dimension of a Cartan subalgebra of a 
given semisimple Lie algebra.)

\section*{Acknowledgements}

We are very grateful to Karl Hermann Neeb and the referee whose detailed comments have greatly improved the
readability of the paper. The second-named author is 
partially supported by the J. C. Bose Fellowship JBR/2023/000003.

\section*{Mandatory declarations}

The authors have no conflict of interest to declare that are relevant to this article.
No data were generated or used.

\end{document}